\theoremstyle{definition}
\newtheorem{theorem}{Theorem}[section]
\newtheorem{thm}[theorem]{Theorem}
\newtheorem{lemma}[theorem]{Lemma}
\newtheorem{remark}[theorem]{Remark}
\newtheorem{corollary}[theorem]{Corollary}
\newtheorem{definition}[theorem]{Definition}
\renewcommand{\red}{\mathord{\mathrm{red}}}
\begin{document}
	

	%
	\title{ Component graphs of vector spaces and zero-divisor graphs of ordered sets  }\maketitle
	\markboth{ Nilesh Khandekar,   Peter J. Cameron and Vinayak Joshi}{Component graphs of vector spaces and zero-divisor graphs of ordered sets   }\begin{center}\begin{large} Nilesh Khandekar$^1$,   Peter J. Cameron$^2$ and Vinayak Joshi$^1$ \end{large}\\\begin{small}\vskip.1in\emph{1. Department of Mathematics,
				Savitribai Phule Pune University,\\ Pune - 411007, Maharashtra,
				India\\
		2.  School of Mathematics and Statistics, University of St Andrews,\\ North Haugh, St Andrews,
		Fife KY16 9SS, UK}\\E-mail: \texttt{ khandekarnilesh11@gmail.com,   pjc20@st-andrews.ac.uk,  vvjoshi@unipune.ac.in, vinayakjoshi111@yahoo.com}\end{small}\end{center}\vskip.2in
	
	\begin{abstract} In this paper, nonzero component graphs and nonzero component union graphs of  finite dimensional vector space are studied using the zero-divisor graph of specially constructed  $0$-$1$-distributive lattice and the zero-divisor graph of rings. Further, we define an equivalence relation on nonzero component graphs and nonzero component union graphs to deduce that these graphs are the graph join of zero-divisor graphs of Boolean algebras and complete graphs. In the last section, we characterize the perfect and chordal nonzero component graphs and nonzero component union graphs.
	\end{abstract}\vskip.2in

	\noindent\begin{Small}\textbf{Mathematics Subject Classification (2020)}: 15A03; 05C17; 05C25; 06A07; 06A11.
		   \\\textbf{Keywords}: Nonzero component  graph; nonzero component union graph; zero-divisor graph; perfect graph; chordal graph. \end{Small}\vskip.2in
	
	\vskip.25in

	\baselineskip 14truept
	\section{Introduction}\label{intro}
The study of graphs associated with algebraic and ordered structures is an active and fruitful area of research. Algebraic structures mainly include groups, rings and vector spaces while ordered structures include posets, lattices and boolean algebras. There are many research articles on associating a graph with an algebraic structure (ordered structure) and investigating the algebraic (ordered) features of algebraic (ordered) structure using the associated graph, and vice versa. Cayley graphs of groups is the first example of this kind. This graph was introduced by Arthur Cayley in 1878. Few other graphs such as zero-divisor graphs of rings, comaximal ideal graphs of rings, nonzero component graph (nonzero component union graph) of finite dimensional vector space  are some examples of the graphs associated with algebraic structures, while zero-divisor graphs of posets and comparability graphs of posets are some of the graphs associated with ordered structures.

Beck \cite{Be} first introduced the concept of a zero-divisor graph $\Gamma(R)$ of a commutative ring $R$ with unity, where the vertex set is the set of elements of $R$, and two vertices $x$ and $y$ are adjacent if $xy = 0$. Anderson and Livingston \cite{AL} modified this definition of  zero-divisor graph $\Gamma(R)$ of a ring $R$ by considering the vertex set to be the set of all nonzero zero-divisors and the adjacency to be the same, that is, $x$ and $y$ are adjacent if $xy = 0$.

The zero-divisor graph ${G}(P)$ of a poset $P$ is defined and explored in a similar way. In \cite{HJ}, the concept of a poset's zero-divisor graph is introduced, which is later modified in \cite{LW}. Assume that $P$ is a poset with $0$. Given any $\emptyset\neq A\subseteq P$,  the \textit{lower cone} of $A$ is given by $A^\ell=\{b\in P$ $|$
$b\leq a$ for every $a\in A\}$. Define a \emph{zero-divisor} of $P$ to be any element of the set
\[Z(P)=\left\{a\in P\mid(\exists b\in P\setminus\{0\}) \{a,b\}^\ell=\{0\}
\right\}.\]
As in \cite{LW}, the \emph{zero-divisor graph} of $P$ is the graph ${G}(P)$ whose vertices are the elements of $Z(P)\setminus\{0\}$ such that two vertices $a$ and $b$ are adjacent if and only if $\{a,b\}^\ell=\{0\}$.
If $Z(P)\neq\{0\}$, then clearly $G(P)$ has at least two vertices, and $G(P)$ is connected with diameter at most three \cite[Proposition 2.1]{LW}.
	
One good reason for studying the zero-divisor graphs of posets is that many 
other examples of graphs defined on algebraic structures, such as the 
\emph{noncyclic graph} of a group, defined by Abdollahi and Hassanabadi
\cite{ah2007}, now perhaps better known as the complement of the \emph{enhanced
power graph} of the group, following~\cite{aetal}. So we have the possibility
of proving results for many different types of algebraic structures by
considering the zero-divisor graphs of posets. This paper is intended as an
illustration of this principle.
	
Recently, Angsuman Das \cite{das, das2} defined and studied the  nonzero component graph (nonzero component union graph) of a finite dimensional vector space. Let $\mathbb{V}$ be a vector space over field $\mathbb{F}$ with  $\mathcal{B}=\{v_1,\dots,v_n\}$ as a basis and $0$ as the null vector. Then any vector $a\in \mathbb{V}$ can be uniquely expressed in linear combination of the form $a=a_1v_1+\dots+a_nv_n$. We denote this representation as a basic representation of $a$ with respect to $\{v_1,\dots,v_n\}$. Define the skeleton of $a$ with respect to $\mathcal{B}$, as
\[S_{\mathcal{B}}(a)=\{v_i\mid a_i\neq 0,  a=a_1v_1+\dots+a_nv_n\}.\] 

 Angsuman Das \cite{das} defined the \textit{nonzero component  graph} $\mathbb{IG(V)}$ with respect to $\mathcal{B}$ as follows: The vertex set of graph  $\mathbb{IG(V)}$ is $\mathbb{V}\setminus \{0\}$ and for any $a,b\in \mathbb{V}\setminus \{0\}$, $a$ is adjacent to  $b$ if and only if $a$ and $b$ share at least one $v_i$ with non-zero coefficient in their basic representation, that is, $a$ and $b$ are adjacent in $\mathbb{IG(V)}$ if and only if $S_{\mathcal{B}}(a)\cap S_{\mathcal{B}}(b)\neq\emptyset$.   It is easy to observe that the nonzero component  graph $\mathbb{IG(V)}$ with respect to a basis $\mathcal{B}$ and  the nonzero component  graph $\mathbb{IG(V)'}$ with respect to a basis $\mathcal{B'}$ are isomorphic. 

In  another paper \cite{das2}, Angsuman Das defined the \textit{nonzero component union  graph} $\mathbb{UG(V)}$ with respect to $\mathcal{B}$ as follows: The vertex set of graph  $\mathbb{UG(V)}$ is $\mathbb{V}\setminus \{0\}$ and for any $a,b\in \mathbb{V}\setminus \{0\}$, $a$ is adjacent to  $b$ if and only if $S_{\mathcal{B}}(a)\cup S_{\mathcal{B}}(b)=\mathcal{B}$. It is easy to observe that the nonzero component union graph $\mathbb{UG(V)}$ and $\mathbb{UG(V)'}$ with respect to basis $\mathcal{B}$ and $\mathcal{B'}$, respectively,  are isomorphic. 	This graph is also known as the \textit{skeleton graph}; see \cite{as2019}. 

Thus, the nonzero component graph will mean  the skeleton intersection graph and  nonzero component union graph will mean  the skeleton union graph. These names are justified by the adjacency of these two graphs.
	
Many results of  	nonzero component graphs and nonzero component union graphs of  vector spaces are similar to as that of zero-divisor graphs of ordered sets. For example, both of these graphs are weakly perfect, both of these graphs are connected  with diameter at most 2 etc. This motivated us to study these graphs from the perspective of the zero-divisor graph of an ordered sets and its join with a complete graph. By the \textit{join} of two graphs $G$ and $H$, we mean  a graph formed from disjoint copies of $G$ and $H$ by connecting each vertex of $G$ to each vertex of $H$. We denote the join of graphs 	$G$ and $H$ by $G\vee H$; see West \cite{west}. 
	
In this paper, we prove the following main result.
	\begin{thm}
		Let $\mathbb{V}$ be a $n$-dimensional  vector space over a field $\mathbb{F}$ and let $t=|V_{12\dots n}|=(|\mathbb{F}|-1)^n$. Let $\mathbb{IG(V)}$ and $\mathbb{UG(V)}$ be the nonzero component graph and nonzero component union graph respectively. 
		\begin{enumerate}
			\item $\mathbb{IG(V)}=G^c(\mathbb{L})\vee K_t$;
			\item $\mathbb{UG(V)}=G(\mathbb{L}^\partial)\vee K_t$;
		\end{enumerate}
where $\mathbb{L}$ is a lattice constructed from $\mathbb{V}$,  $\mathbb{L^\partial}$ is the dual lattice of $\mathbb{L}$ and $G(\mathbb{L})$ is the zero-divisor graph of $\mathbb{L}$.	
\end{thm}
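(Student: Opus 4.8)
The plan is to settle both statements by reducing everything to the combinatorics of skeletons, to split off the vectors of full skeleton as a universal clique, and then to recognise what remains as the zero-divisor graph of $\mathbb{L}$, respectively of its dual $\mathbb{L}^\partial$.

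First I would fix the basis $\mathcal{B}=\{v_1,\dots,v_n\}$ and, for each nonempty $S\subseteq\mathcal{B}$, write $V_S$ for the set of nonzero vectors whose skeleton is exactly $S$; these sets partition $\mathbb{V}\setminus\{0\}$ and $|V_S|=(|\mathbb{F}|-1)^{|S|}$, so $|V_{12\dots n}|=(|\mathbb{F}|-1)^n=t$. Since adjacency in $\mathbb{IG(V)}$ depends only on whether $S_{\mathcal{B}}(a)\cap S_{\mathcal{B}}(b)\neq\emptyset$ and adjacency in $\mathbb{UG(V)}$ only on whether $S_{\mathcal{B}}(a)\cup S_{\mathcal{B}}(b)=\mathcal{B}$, and since $\mathcal{B}\cap S=S\neq\emptyset$ and $\mathcal{B}\cup S=\mathcal{B}$ for every nonempty $S$, every element of $V_{12\dots n}$ is adjacent in both graphs to every other nonzero vector, and $V_{12\dots n}$ induces a complete graph. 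This immediately gives
\[
\mathbb{IG(V)}=H_1\vee K_t,\qquad \mathbb{UG(V)}=H_2\vee K_t,
\]
where $H_1$, $H_2$ are the subgraphs of $\mathbb{IG(V)}$, $\mathbb{UG(V)}$ induced on the set $W=\bigcup_{\emptyset\neq S\subsetneq\mathcal{B}}V_S$ of vectors of proper nonzero skeleton. It then remains to prove $H_1\cong G^c(\mathbb{L})$ and $H_2\cong G(\mathbb{L}^\partial)$.

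Here I would use the construction of $\mathbb{L}$ from $\mathbb{V}$: the underlying set of $\mathbb{L}\setminus\{0,1\}$ is $W$, the zero vector is the bottom, the class $V_{12\dots n}$ is collapsed to the single top element $1$, and the order is set up so that for distinct $a\in V_S$, $b\in V_T$ in $W$ one has $a\wedge_{\mathbb{L}}b=0$ exactly when $S\cap T=\emptyset$, and $a\vee_{\mathbb{L}}b=1$ exactly when $S\cup T=\mathcal{B}$. From this: $1\notin Z(\mathbb{L})$ and the bottom of $\mathbb{L}$ is not a zero-divisor of $\mathbb{L}^\partial$ (because $1\wedge_{\mathbb{L}}x=x$ and $0\vee_{\mathbb{L}}x=x$), while every $a\in W$ with $S_{\mathcal{B}}(a)=S$ is a zero-divisor of both $\mathbb{L}$ and $\mathbb{L}^\partial$, since $\emptyset\neq S\subsetneq\mathcal{B}$ forces $\mathcal{B}\setminus S$ to be a nonempty proper subset and then any $b\in V_{\mathcal{B}\setminus S}\subseteq W$ satisfies $a\wedge_{\mathbb{L}}b=0$ and $a\vee_{\mathbb{L}}b=1$. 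Hence $Z(\mathbb{L})\setminus\{0\}=W$, and since the zero of $\mathbb{L}^\partial$ is $1$ also $Z(\mathbb{L}^\partial)\setminus\{0_{\mathbb{L}^\partial}\}=W$. Taking the identity on $W$ as the vertex correspondence, two distinct vertices $a,b$ are adjacent in $G^c(\mathbb{L})$ iff $a\wedge_{\mathbb{L}}b\neq0$, i.e. iff $S_{\mathcal{B}}(a)\cap S_{\mathcal{B}}(b)\neq\emptyset$, i.e. iff they are adjacent in $H_1$; and they are adjacent in $G(\mathbb{L}^\partial)$ iff their meet in $\mathbb{L}^\partial$ (that is, $a\vee_{\mathbb{L}}b$) equals the zero of $\mathbb{L}^\partial$ (that is, $1$), i.e. iff $S_{\mathcal{B}}(a)\cup S_{\mathcal{B}}(b)=\mathcal{B}$, i.e. iff they are adjacent in $H_2$. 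This yields $H_1\cong G^c(\mathbb{L})$ and $H_2\cong G(\mathbb{L}^\partial)$, and hence (1) and (2).

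The hard part will be the interface between $\mathbb{L}$ and the skeletons: checking that meet in $\mathbb{L}$ genuinely collapses to intersection of skeletons, and that join collapses to union of skeletons except that any two proper-skeleton vectors whose skeletons already cover $\mathcal{B}$ are forced all the way up to the single top $1$ rather than to some element strictly below it. This is precisely the point at which collapsing the whole class $V_{12\dots n}$ to one element is indispensable, and it is what makes the peeled-off complete graph have exactly $t=|V_{12\dots n}|$ vertices; were $V_{12\dots n}$ instead spread out below $1$, the proper-skeleton vectors would cease to be zero-divisors of $\mathbb{L}^\partial$ and part (2) would fail. If the basic facts about $\mathbb{L}$ (that it is a lattice, even $0$-$1$-distributive, with the stated meet and join) are not already available, establishing them is the real bulk of the work; granting them, the degenerate cases are immediate --- for $n=1$ the lattice $\mathbb{L}$ is the two-element chain and both graphs are $K_t$, and for $|\mathbb{F}|=2$ the lattice $\mathbb{L}$ is the Boolean algebra $2^{\mathcal{B}}$ with $t=1$.
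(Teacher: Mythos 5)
Your proposal is correct and follows essentially the same route as the paper: peel off $V_{12\dots n}$ as the universal clique $K_t$, then identify the induced subgraph on the proper-skeleton vectors with $G^c(\mathbb{L})$ (resp.\ $G(\mathbb{L}^\partial)$) via the facts that $a\wedge_{\mathbb{L}}b=0$ iff the skeletons are disjoint and $a\vee_{\mathbb{L}}b=1$ iff the skeletons cover $\mathcal{B}$. Your explicit check that $Z(\mathbb{L})\setminus\{0\}=Z(\mathbb{L}^\partial)\setminus\{1\}=W$ is a slightly more careful rendering of the paper's remark that $V(G(\mathbb{L}))=\mathbb{L}\setminus\{0,1\}$ because $[\mathbb{V}]$ is Boolean, but the decomposition and key steps coincide.
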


In the last section, we  apply results on chordal and perfect zero-divisor graphs of ordered sets to decide when the graphs $\mathbb{IG(V)}$ and $\mathbb{UG(V)}$ are perfect or chordal.
	
	\section{Preliminaries}

	\par We begin with the following necessary definitions and terminologies given in Devhare et al. \cite{djl} and  Khandekar and Joshi \cite{nkvj2}. 

\begin{definition}[\cite{djl} and \cite{nkvj2}]\label{defn2.1}
	\par Let $P$ be a poset. Given any $\emptyset\neq A\subseteq P$, the \textit{upper cone} of $A$ is given by 
	$A^u=\{b\in P$ $|$ $b\geq a$ for every $a\in A\}$.  If $a\in P$, then the sets $\{a\}^u$ and
	$\{a\}^\ell$ will be denoted by $a^u$ and $a^\ell$, respectively. By $A^{u\ell}$, we mean $\{A^u\}^\ell$. Dually, we have the notion of $A^{\ell u}$.\end{definition}

	A poset $P$ with  0 is called \textit{$0$-distributive} if  $\{a,b\}^\ell=\{0\}=\{a,c\}^\ell$ implies $\{a,\{b,c\}^u\}^\ell=0$; see \cite{jw1}. Note that if $\{b,c\}^u=\emptyset$, then $\{b,c\}^{u\ell}=P$. 
	A lattice $L$ with $0$ is said to be {\it $0$-distributive }if $a\wedge b=0$ and $a\wedge c=0$ implies $a\wedge(b\vee c)=0$. Hence it is clear that if a lattice $L$ is 0-distributive, then $L$, as a poset, is a 0-distributive poset. Dually, a  lattice $L$ with $1$ is said to be \textit{$1$-distributive} if $a\vee b=1$ and $a\vee c=1$ implies $a\vee(b\wedge c)=1$.

	\par We set    $\mathcal{D}=P\setminus Z(P)$. The elements  $d\in\mathcal{D}$ are the {\it dense elements} of $P$.

	\par  Throughout, $P$ denotes a poset with $0$ and $q_i$, $i \in\{1,2, \cdots, n\}$ are the atoms of $P$, where $n \geq 2$. 
	
Afkhami et al. \cite{afkhami} partitioned the set $P\setminus \{0\}$ as follows.
	
Let $1\leq i_1< i_2<\dots<i_k\leq n$, where $k>0$. The notation $P_{i_1i_2\dots i_k}$ stands for the set
$$P_{i_1i_2\dots i_k}=\Bigg\{x\in P~\mathbin{\Big|}~x\in\biggl( \bigcap\limits_{s=1}^k\{q_{_{i_s}}\}^u\biggr) \mathbin{\Big\backslash}\biggl(\bigcup\limits_{j\neq i_1,i_2,\dots,i_k}\{q_{_j}\}^u\biggr)\Bigg\}. \hfill{     \hspace{.2in}  -------(\circledcirc)}$$ 
Thus $P_{i_1i_2\dots i_k}$ is the set of elements $x$ of $P$ such that the atoms below $x$ are precisely $q_{i_1},q_{i_2},\ldots,q_{i_k}$
	
In \cite{afkhami}, the following observations are proved; these show that the
sets just defined partition $P\setminus\{0\}$.

\begin{enumerate}
\item If the index sets $\{i_1,\dots,i_k\}$ and $\{j_1,\dots,j_{k'}\}$ of $P_{i_1i_2\dots i_k}$ and $P_{j_1j_2\dots j_{k'}}$, respectively, are distinct, that is, $\{i_1,\dots,i_k\}\neq \{j_1,\dots,j_{k'}\}$, then $(P_{i_1i_2\dots i_k})\cap (P_{j_1j_2\dots j_k'})=\emptyset$.
\item $\displaystyle P\backslash \{0\}= \bigcupdot\limits_{\substack{k=1,\\ 1\leq i_1< i_2<\dots<i_k\leq n}}^{n} P_{i_1i_2\dots i_k}$.
\end{enumerate}
	
Define a relation $\approx$ on $P\setminus\{0\}$ as follows:
$x\approx y$ if and only if $x,y\in P_{i_1i_2\dots i_k}$ for some part $P_{i_1i_2\dots i_k}$ of the partition just defined. Thus $x\approx y$ if $x$ and $y$
are above the same atoms of $P$.

The set of equivalence classes under $\approx$ of $P\setminus \{0\}$ will be denoted by
$$[P]'=\left\{P_{i_1i_2\dots i_k}\mid\{i_1,i_2\dots,i_k\}\subseteq \{1,2,\dots,n\}, \text{and } P_{i_1i_2\dots i_k}\neq\emptyset\right\}.$$ 
Now, we  set $[P]= [P]'\cup P_0$, where $P_0=\{0\}$. We define relation $\leq$ on $[P]$ as follows. $P_{i_1i_2\dots i_k}\leq P_{j_1j_2\dots j_m}$  if and only if $b^\perp\subseteq a^\perp$, for some $a\in P_{i_1i_2\dots i_k}$ and for some $ b\in P_{j_1j_2\dots j_m}$, where $P_{i_1i_2\dots i_k}, P_{j_1j_2\dots j_m}\in [P]' $  and $P_0\leq P_{i_1i_2\dots i_k}$ for all $\{i_1,i_2\dots,i_k\}\subseteq \{1,2,\dots,n\}$. It is not very difficult to prove that $([P], \leq)$ is a poset. The least element of  $([P], \leq)$ is $P_0$ and if $P$ has the greatest element 1, then the greatest element of the poset $([P],\; \leq)$ is $P_{12\dots n}$.

	The following statements (1)--(4) are essentially proved in \cite{djl} (see Lemma 4.2, Lemma 4.5). 
	 We write these statements in terms of $P_{i_1i_2\dots i_k}$ (see \cite{nkvj2}). These properties will be used  in the sequel.
	
	\begin{lemma}\label{property} 
		The following statements are true.
		\begin{enumerate}
			\item If $q_{_1},q_{_2},\dots,q_{_n}$ are distinct atoms of ${ P}$, then $[ {q}_{_1} ],\dots,[ {q}_{_n} ]$ are distinct atoms of $[{ P}]$. Note that $[q_{_i}]=P_{i}$, for every $i\in\{1,\dots,n\}$ .
			\item If $ {a}\leq {b}$ in ${ P}$, then $[{a} ]\leq [{b} ]$ in $[{ P}]$. Moreover, $P_{i_1i_2\dots i_k}\leq P_{j_1j_2\dots j_m}$ in $[P]$ if and only if  $\{i_1,i_2,\dots, i_k\}\subseteq \{j_1,j_2,\dots, j_m\}$.

			\item $\{{a,b} \}^\ell=\{{0} \}$ in ${ P}$ if and only if $\{[{a} ],[{b} ]\}^\ell=\{[{0} ]\}$ in $[{ P}]$. Note that the lower cones are taken in the respective posets. $P_{i_1i_2\dots i_k}$ and $P_{j_1j_2\dots j_m}$ are adjacent in $G([P])$ if and only if $\{i_1,i_2,\dots, i_k\}\cap \{j_1,j_2,\dots, j_m\}=\emptyset$. Further, $ {a} \in V(G({ P}))$ if and only if $[{a} ] \in V(G([{ P}]))$.

			\item Let $[{a} ]\in V(G([{ P}]))$. Then for any ${x,y}\in [{a} ]$, $\{{x,y} \}^\ell\neq \{{0} \}$ in ${ P}$. Hence vertices of $[{a} ]$ forms an independent set in $G({ P})$. Further, if  $\{[{a} ],[{b} ]\}^\ell=\{[{0} ]\}$ in $[{ P}]$, then for any ${x}\in [{a} ]$ and for any  $ {y}\in [{b} ]$,    $\{{x,y} \}^\ell=\{{0} \}$ in ${ P}$. In particular, $[{a} ]$ and $ [{b}]$ are adjacent in $G({ [P]})$ with $|[{a} ]|=m$, $|[{b} ]|=n$, then the vertices of $[{a} ]$ and $[{b} ]$ forms an induced complete bipartite subgraph $K_{m,n}$ of $G({ P})$. Moreover, for any $x, y \in [a]$, deg$_{G(P)}(x)= $ deg$_{G(P)}(y)$.  
		 
		\end{enumerate}
	\end{lemma}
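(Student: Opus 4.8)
The plan is to derive all four parts from one dictionary linking lower cones in $P$ to the index sets appearing in $(\circledcirc)$: for nonzero $a,b\in P$ with $a\in P_{i_1\dots i_k}$ and $b\in P_{j_1\dots j_m}$, one has $\{a,b\}^\ell=\{0\}$ in $P$ \emph{if and only if} $\{i_1,\dots,i_k\}\cap\{j_1,\dots,j_m\}=\emptyset$. One direction is trivial: a common index $i$ gives the atom $q_i\leq a$, $q_i\leq b$, so $q_i\in\{a,b\}^\ell\setminus\{0\}$. For the converse I would use that $P$ is atomic --- which is precisely the fact, recalled just before the lemma, that the parts $P_{i_1\dots i_k}$ cover $P\setminus\{0\}$: if $0\neq c$ is a lower bound of $\{a,b\}$, pick an atom $q\leq c$; then $q\leq a$ forces $q=q_i$ with $i\in\{i_1,\dots,i_k\}$ and $q\leq b$ forces $q=q_j$ with $j\in\{j_1,\dots,j_m\}$, so $i=j$ belongs to both index sets. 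A corollary, which also shows that $\leq$ on $[P]$ is well defined, is that $a^\perp=\{x\in P\mid\{a,x\}^\ell=\{0\}\}$ depends only on the part of $a$: indeed $a^\perp=\{0\}\cup\bigcup\{P_{s_1\dots s_l}\mid\{s_1,\dots,s_l\}\cap\{i_1,\dots,i_k\}=\emptyset\}$.

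For (2), if $a\leq b$ in $P$ then every lower bound of $\{a,x\}$ is a lower bound of $\{b,x\}$, so $b^\perp\subseteq a^\perp$ and hence $[a]\leq[b]$ (the case $a=0$ being the convention $P_0\leq P_{i_1\dots i_k}$). By the dictionary, $b^\perp\subseteq a^\perp$ says that every index set disjoint from $\{j_1,\dots,j_m\}$ is disjoint from $\{i_1,\dots,i_k\}$; applying this to $x=q_i$ for each $i\notin\{j_1,\dots,j_m\}$ forces $\{i_1,\dots,i_k\}\subseteq\{j_1,\dots,j_m\}$, and the converse is immediate --- this is the ``moreover'' of (2). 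Part (1) is then quick: the only atom below the atom $q_i$ is $q_i$ itself, so $q_i\in P_i$ and $[q_i]=P_i$; the classes $P_1,\dots,P_n$ are pairwise distinct because their index sets are; and by the ``moreover'' no class of $[P]$ lies strictly between $P_0$ and $P_i$, so each $P_i$ is an atom of $[P]$.

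Part (3) is assembly: by the dictionary $\{a,b\}^\ell=\{0\}$ in $P$ iff the index sets of $a$ and $b$ are disjoint, while by (2) a common lower bound $[c]\neq[0]$ of $[a]$ and $[b]$ in $[P]$ is precisely a nonempty index set contained in both, so $\{[a],[b]\}^\ell=\{[0]\}$ iff those index sets are disjoint; comparing the two gives the equivalence, and then the reformulation of adjacency in $G([P])$ and the equivalence $a\in V(G(P))\iff[a]\in V(G([P]))$ follow at once (note $[b]=[0]\iff b=0$, so a neighbour of $[a]$ in $G([P])$ is witnessed by any actual vertex $b$ of that part). For (4): any $x,y\in[a]=P_{i_1\dots i_k}$ share the nonempty index set $\{i_1,\dots,i_k\}$, so $\{x,y\}^\ell\neq\{0\}$ by the dictionary and $[a]$ is independent in $G(P)$; if $\{[a],[b]\}^\ell=\{[0]\}$ then by (3) the index sets of $a$ and $b$ are disjoint, so the dictionary makes every $x\in[a]$ adjacent to every $y\in[b]$, which together with the independence of $[a]$ and of $[b]$ is exactly an induced $K_{m,n}$; finally, $x\in[a]$ has no neighbour inside $[a]$ and none in any part non-adjacent to $[a]$, so its neighbourhood in $G(P)$ equals $\bigcup\{\,[b]\mid[a]\sim[b]\ \text{in}\ G([P])\,\}$, which is independent of the choice of $x\in[a]$; hence $\deg_{G(P)}(x)=\deg_{G(P)}(y)$.

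The one genuine obstacle is the converse half of the dictionary, where atomicity of $P$ --- equivalently, the already-recorded fact that $\bigcup P_{i_1\dots i_k}=P\setminus\{0\}$ --- is indispensable: without an atom beneath a common lower bound of $a$ and $b$ the correspondence between lower cones and index sets collapses. Everything after that is finite-set bookkeeping and unwinding the definition of $\leq$ on $[P]$; in particular $0$-distributivity plays no role in this lemma.
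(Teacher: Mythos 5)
Your proposal is correct. Note that the paper itself gives no proof of this lemma: it simply records that the statements are ``essentially proved'' in the cited works (Lemmas 4.2 and 4.5 of Devhare--Joshi--LaGrange, restated in the notation of Khandekar--Joshi), so there is no in-text argument to compare against; what you have written is a self-contained verification in the spirit of those sources. Your single ``dictionary'' --- $\{a,b\}^\ell=\{0\}$ in $P$ iff the index sets of the parts containing $a$ and $b$ are disjoint, with the converse direction resting exactly on the recorded fact that the sets $P_{i_1\dots i_k}$, $k\geq 1$, cover $P\setminus\{0\}$ --- does carry all four parts, and your observation that $a^\perp$ depends only on the class of $a$ is precisely what makes the ``for some $a$, for some $b$'' definition of $\leq$ on $[P]$ usable; testing $b^\perp\subseteq a^\perp$ against the atoms $q_i$ with $i\notin\{j_1,\dots,j_m\}$ is the right way to get the ``moreover'' of (2). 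The only step you compress is in (3): when the index sets meet in some $i$, the common nonzero lower bound of $[a]$ and $[b]$ in $[P]$ must be an actually nonempty class, and the witness is $P_i$, nonempty because $q_i\in P_i$ (which you establish in (1)); spelling this out would close the one small seam. Your closing remarks --- that atomicity (the partition fact) is the indispensable ingredient and that $0$-distributivity is not used in this lemma --- are both accurate.
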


\section{Interplay}	

	 With this preparation, we now give a relation between the  skeleton intersection (union) graph  of a finite dimensional vector space and the zero-divisor graph of a poset.
	 
	 For this, let $\mathbb{V}$ be a finite dimensional vector space over a field $\mathbb{F}$ with  $\mathcal{B}=\{v_1,\dots,v_n\}$ as a basis. Let $1\leq i_{1}<i_{2}\dots <i_{k}\leq n.$ The notation, $V_{\emptyset}=\{0\}$ (null vector/zero vector) and  $V_{i_{1}i_{2}\dots i_{k}}$ stands for the set 
\[V_{i_{1}i_{2}\dots i_{k}}=\left\{a\in\mathbb{V}\mid (a_j\neq0)
\Leftrightarrow(j\in\{i_1,\dots,i_k\})\right\},\]
where $a=a_1v_1+\dots+a_nv_n$. We also denote this set by $V_I$, where
$I=\{i_1,i_2,\ldots,i_k\}$.
	
For any $\{i_1,\dots,i_k\}\subseteq\{1,2,\dots,n\}$, we have $v_{i_1}+\dots+v_{i_k}=w$ (say). Clearly, $w\in V_{i_{1}i_{2}\dots i_{k}}$. Thus $V_{i_{1}i_{2}\dots i_{k}}\neq\emptyset$ for every $\{i_1,\dots,i_k\}\subseteq\{1,2,\dots,n\}$. Also observe that $|V_{i_{1}i_{2}\dots i_{k}}|=(|\mathbb{F}|-1)^k$ for all nonempty set $\{i_1,\dots,i_k\}\subseteq\{1,2,\dots,n\}$. 
	
Since any vector $a\in \mathbb{V}$ can be uniquely expressed as $a=a_1v_1+\dots+a_nv_n$, we have $V_{i_{1}\dots i_{k}}\cap V_{j_{1}\dots j_{l}}=\emptyset$ if $\{i_1,\dots,i_k\}$ and $\{j_1,\dots,j_l\}$ are distinct subsets of $\{1,2,\dots,n\}$.
	
For any $a\in \mathbb{V}$, if $a=0$, then $0=a\in V_{\emptyset}$; otherwise, if $a\neq 0$, then $a\in V_{i_{1}\dots i_{k}}$ for some nonempty subset $\{i_1,\dots,i_k\}$ of $\{1,2,\dots,n\}$. 
	
Thus, $\mathbb{V}=\bigcupdot\limits_{I\subseteq\{1,\dots,n\}}V_I$.
	
We define $[\mathbb{V}]=\big\{V_I|I\subseteq\{1,\dots,n\}\big\}$. Define a relation $\leq$ on $[\mathbb{V}]$ as: $V_I\leq V_J$ if and only if $I\subseteq J$. Clearly, $[\mathbb{V}]$ is a poset. In fact, $[\mathbb{V}]$ is a lattice under $V_I \wedge V_J=V_{I\cap J}$ (meet) and $V_I \vee V_J=V_{I\cup J}$ (join). Further, we observe that $[\mathbb{V}]$ contains $2^n$ elements and is a Boolean lattice isomorphic to $P(X)$ for $X=\{1,\ldots,n\}$. In fact, the map $I\mapsto V_I$ for $I\subseteq X$, is an isomorphism between  $[\mathbb{V}]$ and $P(X)$.
	
Now, we construct a lattice $\mathbb{L}$ from $[\mathbb{V}]$ such that the zero-divisor graph of $\mathbb{L}$ is related to the  skeleton intersection graph and  skeleton  union graph of  $\mathbb{V}$.
	
 We replace $V_I\in [\mathbb{V}]$  by the chain of elements of $V_I$ in $\mathbb{L}$ in some pre-determined well-order, where $I\neq \emptyset$ and $I\neq \{1,\dots,n\}$. The elements $V_{\emptyset}$ and $V_{\{1,\dots,n\}}$ of $[\mathbb{V}]$ are replaced  by $0$ and $1$ respectively, in $\mathbb{L}$.
	
We illustrate this construction with the following example.
	
Consider $\mathbb{V}$ as a $3-$dimensional vector space over field $\mathbb{F}$, where $|\mathbb{F}|=3$. Let $\mathcal{B}=\{v_1,v_2,v_3\}$ be a basis of  $\mathbb{V}$ and let $\mathbb{F}=\{\overline{0},\overline{1},\overline{2}\}$.
	
\begin{figure}[htbp]
\begin{center}
\begin{tikzpicture}[scale =0.9]
			
\draw [fill=black] (0,0) circle (.1); \draw [fill=black] (-2,2) circle (.1);\draw [fill=black] (0,2) circle (.1);\draw [fill=black] (2,2) circle (.1);\draw [fill=black] (-2,5) circle (.1);\draw [fill=black] (0,5) circle (.1);\draw [fill=black] (2,5) circle (.1);\draw [fill=black] (0,7) circle (.1);

\draw (0,0) -- (-2,2)--(-2,5)--(0,7)--(0,5)--(2,2)--(0,0)--(0,2)--(2,5)--(0,7)--(0,5)--(-2,2);
\draw (2,2)--(2,5)--(0,2)--(-2,5);			
\node [below] at (0,-0.1) {$V_{\emptyset}$};\node [left] at (-2,2) {$V_1$};\node [left] at (0,2) {$V_2$};	\node [right] at (2,2) {$V_3$};\node [left] at (-2,5) {$V_{12}$};\node [left] at (0,5) {$V_{13}$};\node [right] at (2,5) {$V_{23}$};\node [above] at (0,7) {$V_{123}$};		  
			
\node [below] at (0,-0.7) {(A) \text{The lattice} $[\mathbb{V}]$};

\begin{scope}[shift={(7,0)}]

\draw [fill=black] (0,0) circle (.1); \draw [fill=black] (-2,1.5) circle (.1);\draw [fill=black] (-2,2) circle (.1);\draw [fill=black] (-2,4) circle (.1);\draw [fill=black] (-2,4.5) circle (.1);\draw [fill=black] (-2,5) circle (.1);\draw [fill=black] (-2,5.5) circle (.1);\draw [fill=black] (0,7) circle (.1);\draw [fill=black] (0,5.5) circle (.1);\draw [fill=black] (0,5) circle (.1);\draw [fill=black] (0,4.5) circle (.1);\draw [fill=black] (0,4) circle (.1);\draw [fill=black] (0,2) circle (.1);\draw [fill=black] (0,1.5) circle (.1);\draw [fill=black] (2,1.5) circle (.1);\draw [fill=black] (2,2) circle (.1);\draw [fill=black] (2,4) circle (.1);\draw [fill=black] (2,4.5) circle (.1);\draw [fill=black] (2,5) circle (.1);\draw [fill=black] (2,5.5) circle (.1);
		
\draw (0,0)--(-2,1.5)--(-2,5.5)--(0,7)--(0,4)--(2,2)--(2,1.5)--(0,0)--(0,2)--(2,4)--(2,5.5)--(0,7)--(0,4)--(-2,2);
\draw (2,2)--(2,4)--(0,2)--(-2,4);		
\node [below] at (0,-0.1) {$0$}; \node [left] at (-2,1.5) {$v_1$};\node [left] at (-2,2) {$\overline{2}v_1$};
\node [left] at (0,1.5) {$\overline{2}v_2$};\node [left] at (0,2) {$v_2$};\node [right] at (2,1.5) {$v_3$};\node [right] at (2,2) {$\overline{2}v_3$};

\node [left] at (-2,4) {$v_1+v_2$};\node [left] at (-2,4.5) {$v_1+\overline{2}v_2$};\node [left] at (-2,5) {$\overline{2}v_1+v_2$};\node [left] at (-2,5.5) {$\overline{2}v_1+\overline{2}v_2$};

\node [left] at (0,5.5) {$v_1+v_3$};\node [left] at (0,4) {$v_1+\overline{2}v_3$};\node [left] at (0,5) {$\overline{2}v_1+v_3$};\node [left] at (0,4.5) {$\overline{2}v_1+\overline{2}v_3$};

\node [right] at (2,4.5) {$v_2+v_3$};\node [right] at (2,5.5) {$v_2+\overline{2}v_3$};\node [right] at (2,4) {$\overline{2}v_2+v_3$};\node [right] at (2,5) {$\overline{2}v_2+\overline{2}v_3$};

\node [above] at (0,7) {$1$};		
	
\node [below] at (0,-0.7) {(B) \text{The lattice} $\mathbb{L}$ derived from $[\mathbb{V}]$};	
			
\end{scope}
\end{tikzpicture}
\end{center}
\caption{}\label{v3f3}
\end{figure}

To derive the properties of $\mathbb{L}$, we need the following results which are straight forward. Hence proofs are omitted.

 \begin{lemma}\label{ll'}
 	Let $L$ be a finite lattice and $L'$ be a poset obtained from $L$ by replacing an element of $L$ by a bounded chain. Then the following statements hold:
 	\begin{enumerate}
 		\item The poset $L'$ is a  lattice.
 		\item If $L$ is a $0$-distributive ($1$-distributive) lattice, then $L'$ is   a $0$-distributive ($1$-distributive) lattice.
 		
 	\end{enumerate}
 \end{lemma}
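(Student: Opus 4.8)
The plan is to handle the two parts in turn, and in both to exploit the natural retraction $\varphi\colon L'\to L$ that collapses the inserted chain back to the element it replaced. Fix once and for all the element $x\in L$ that is split, and write $C\colon c_0<c_1<\dots<c_m$ for the chain put in its place (least element $c_0$, greatest $c_m$), so that in $L'$ every element that was $<x$ lies below $c_0$, every element that was $>x$ lies above $c_m$, and elements incomparable with $x$ are incomparable with each $c_i$. For part (1), since $L'$ is finite it suffices to exhibit a greatest element and all pairwise meets (a finite meet-semilattice with a top is a lattice): the top is $1_L$, or $c_m$ if $x=1_L$. To produce meets I would argue by cases on whether $a,b$ lie in $C$, and, when they do not, on their comparability with $x$. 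The one case worth naming is $a,b\in L\setminus\{x\}$ with $a\wedge_L b=x$: then $a,b>x$, hence $a$ and $b$ lie above all of $C$, so $a\wedge_{L'}b=c_m$; dually, a join that was $x$ in $L$ becomes $c_0$ in $L'$. In every remaining case the meet (join) is the one computed in $L$, or the smaller (larger) of two chain elements, or one of $a,b$ itself. This case analysis also records the explicit meet and join formulas in $L'$ used below.

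For part (2), the key tool is the retraction $\varphi\colon L'\to L$ with $\varphi(c_i)=x$ for every $i$ and $\varphi(y)=y$ for $y\in L\setminus\{x\}$. It is an order-preserving surjection, and the formulas from part (1) show that it is a lattice homomorphism: preservation of meets is again the case analysis above, the only slightly delicate instance being $a,b\in L\setminus\{x\}$ with $a\wedge_L b=x$, where $\varphi(a\wedge_{L'}b)=\varphi(c_m)=x=\varphi(a)\wedge_L\varphi(b)$; preservation of joins then follows by order duality, since $(L')^\partial$ is obtained from $L^\partial$ by the same ``split a point into a chain'' operation, for which $\varphi$ is the collapse map. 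Now assume $L$ is $0$-distributive and let $0'$ be the bottom of $L'$; being an order-preserving surjection, $\varphi$ sends $0'$ to $0_L$. If $a\wedge_{L'}b=0'=a\wedge_{L'}c$, then applying $\varphi$ gives $\varphi(a)\wedge_L\varphi(b)=0_L=\varphi(a)\wedge_L\varphi(c)$, whence $\varphi(a)\wedge_L(\varphi(b)\vee_L\varphi(c))=0_L$ by $0$-distributivity of $L$, i.e. $\varphi\bigl(a\wedge_{L'}(b\vee_{L'}c)\bigr)=0_L$ since $\varphi$ preserves both operations. Hence $a\wedge_{L'}(b\vee_{L'}c)\in\varphi^{-1}(0_L)$. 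When $x\ne 0_L$ this fibre is the singleton $\{0'\}$, and we are done. The remaining case $x=0_L$ (which never arises in the paper's construction, where the bottom element is not split) is handled directly and is trivial: then $c_1$ is the unique atom of $L'$, so $a\wedge_{L'}b=0'$ forces $a=0'$ or $b=0'$, and $0$-distributivity is immediate. Finally, the $1$-distributive assertion follows from the $0$-distributive one applied to $L^\partial$, since $(L')^\partial$ is again of the same form and duality interchanges $0$- and $1$-distributivity.

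The main obstacle is bookkeeping rather than conceptual: keeping the inserted chain placed consistently, and checking that the ``collapsed'' meets and joins — precisely those equal to $x$ in $L$ — land on the correct end of $C$ (meets on $c_m$, joins on $c_0$), which is exactly what makes $\varphi$ a lattice homomorphism. Once $\varphi$ is known to be a surjective lattice homomorphism with singleton fibre over $0_L$, the transfer of $0$-distributivity, and dually of $1$-distributivity, is routine; part (1) is needed first only so that it makes sense to speak of the $0$-distributivity of $L'$.
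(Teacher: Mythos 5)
The paper itself gives no argument for Lemma \ref{ll'}: it is dismissed as ``straightforward'' and the proof is omitted, so there is nothing to compare line by line. Your proof is correct and supplies the missing details. The case analysis for part (1) is sound: the only nontrivial point is exactly the one you isolate, namely that a pair $a,b\in L\setminus\{x\}$ whose meet in $L$ was the split element $x$ must satisfy $a,b>x$, so their meet in $L'$ is the top $c_m$ of the inserted chain (dually $c_0$ for joins), all other meets being inherited from $L$ or read off inside the chain. Organizing part (2) around the collapse map $\varphi\colon L'\to L$ and checking it is a surjective lattice homomorphism is a clean way to transfer $0$-distributivity, and you correctly notice the one place where the fibre argument alone is insufficient --- when the replaced element is $0_L$, so that $\varphi^{-1}(0_L)$ is the whole chain --- and patch it by observing that in that case every nonzero element of $L'$ lies above $c_1$, making $0$-distributivity trivial; the $1$-distributive half then follows by duality since the construction is self-dual. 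This is a complete and arguably more systematic verification than the paper's ``proof omitted''; a purely elementwise check of the $0$-distributivity condition in $L'$ (splitting into cases according to whether $a,b,c$ lie on the chain) would also work and is presumably what the authors had in mind, but your homomorphism argument handles the meet/join bookkeeping once and reuses it for both parts.
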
	
	
	Since $[\mathbb{V}]$ is a Boolean lattice and every Boolean lattice is $0$-distributive and $1$-distributive, so is $\mathbb{L}$ by Lemma \ref{ll'}. 
	\begin{lemma}
		The lattice $\mathbb{L}$ derived from $[\mathbb{V}]$ is a $0$-distributive as well as $1$-distributive.
	\end{lemma}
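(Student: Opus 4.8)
The plan is to obtain this as an immediate consequence of Lemma~\ref{ll'} together with the observation, recorded just above, that $[\mathbb{V}]$ is a Boolean lattice (isomorphic to $P(X)$ with $X=\{1,\dots,n\}$) and hence both $0$-distributive and $1$-distributive. First I would describe $\mathbb{L}$ as the result of applying Lemma~\ref{ll'} repeatedly, starting from the finite lattice $[\mathbb{V}]$: for each proper nonempty subset $I\subseteq\{1,\dots,n\}$ in turn, replace the point $V_I$ by the bounded chain on the $(|\mathbb{F}|-1)^{|I|}$ elements of $V_I$ taken in the chosen well-order, while $V_\emptyset$ and $V_{\{1,\dots,n\}}$ are single points and are merely renamed $0$ and $1$, requiring no move. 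One then checks that the comparabilities produced by the chain-replacement operation between distinct blocks are exactly those in which every element of the chain replacing $V_I$ lies below every element of the chain replacing $V_J$ precisely when $I\subseteq J$; this matches the defining order of $\mathbb{L}$, so the poset reached at the end of this finite sequence of moves really is $\mathbb{L}$.

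Next I would run the induction on the number of moves. At each stage the current structure is still a finite lattice by Lemma~\ref{ll'}(1), so Lemma~\ref{ll'} applies to the next move, and by Lemma~\ref{ll'}(2) the properties of being $0$-distributive and $1$-distributive pass from the current lattice to the next. Since $[\mathbb{V}]$ enjoys both properties, so does $\mathbb{L}$ after the finitely many (indeed $2^n-2$) moves. As a by-product this re-confirms that $\mathbb{L}$ is a lattice.

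I do not expect a genuine obstacle here; the only care needed is bookkeeping: confirming that a single chain-replacement introduces no new comparabilities between blocks $V_I$ and $V_J$ with $I,J$ incomparable, and that the iterated construction reproduces $\mathbb{L}$ on the nose. If one wished to bypass the iteration, one could argue directly in $\mathbb{L}$: given $x\wedge y=0=x\wedge z$, let $V_I,V_J,V_K$ be the blocks containing $x,y,z$; then $I\cap J=\emptyset=I\cap K$ in the Boolean lattice $[\mathbb{V}]$, whence $I\cap(J\cup K)=\emptyset$, and translating back gives $x\wedge(y\vee z)=0$, with the dual argument for $1$-distributivity. But this merely re-proves the relevant case of Lemma~\ref{ll'}, so the first route is cleaner.
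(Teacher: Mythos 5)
Your proposal is correct and follows the same route as the paper: the paper simply notes that $[\mathbb{V}]$ is a Boolean lattice, hence $0$- and $1$-distributive, and invokes Lemma~\ref{ll'} to transfer these properties to $\mathbb{L}$. Your more careful spelling out of the iteration (one chain-replacement per proper nonempty block, finitely many moves) just makes explicit what the paper leaves implicit.
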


Now, we construct the poset $[\mathbb{L}]$ under the equivalence relation $\approx$ on $\mathbb{L}$.	It is easy to observe that $V_\emptyset=\mathbb{L}_0$ (the set $P_0$ mentioned in Definition \ref{defn2.1}) and $V_I=\mathbb{L}_I$ for nonempty subset $I\subseteq \{1,\dots,n\}$. Thus, $[\mathbb{V}]\cong [\mathbb{L}]$. 

Henceforth we are using $[\mathbb{V}]$ and $[\mathbb{L}]$, $V_I$ and $\mathbb{L}_I$ without any distinction. Also, we are using Lemma \ref{property} in terms of $V_I$. 
	Note that for any $x,y\in \mathbb{L}$ $(x\in V_I$ and $y\in V_J$ for some $I,J\subseteq\{1,\dots,n\})$.
	
	\begin{enumerate}
		\item $x\wedge y=0$ in $\mathbb{L}$ if and only if $V_I\wedge V_J=V_{\emptyset}$ if and only if $I\cap J=\emptyset$.
		
		\item $x\vee y=1$ in $\mathbb{L}$ if and only if $V_I\vee V_J=V_{\{1,\dots,n\}}$ if and only if $I\cup J=\{1,\dots,n\}$.
	\end{enumerate}
	
	Now, we make a very important observation. Recall that $\mathcal{B}$ is
a basis for $\mathbb{V}$. For any $a\in \mathbb{V}$, 
	
	\begin{enumerate}
		\item If $a=0$, then $S_{\mathcal{B}}(a)=\emptyset$ and $a\in V_{\emptyset}$
		
		\item If $a\neq 0$, then $S_{\mathcal{B}}(a)\subseteq \mathcal{B}$. In particular, $S_{\mathcal{B}}(a)=\{v_{i_1},\dots,v_{i_k}\}\subseteq \mathcal{B}$ if and only if $a\in V_I$, where $I=\{i_1,\dots,i_k\}$.
	\end{enumerate}

	Note that $L^\partial$ denotes the dual of a lattice $L$ (that is $a\leq b$ in $L$ if and only if $b\leq a$ in $L^\partial$), which is also lattice. If $L$ is a $0-1$-distributive lattice, then so is $L^\partial$.
	
	Let $G(\mathbb{L})$ be the zero-divisor graph of the lattice $\mathbb{L}$ and $G^c(\mathbb{L})$ be its graph complement.
	With this preparation, we are ready to prove our  two main theorems.
	
	\begin{thm}\label{igv}
		$\mathbb{IG(V)}=G^c(\mathbb{L})\vee K_t$, where $t=|V_{12\dots n}|=(|\mathbb{F}|-1)^n$.
	\end{thm}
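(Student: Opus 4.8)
The plan is to split the vertex set $\mathbb{V}\setminus\{0\}$ of $\mathbb{IG(V)}$ into the block $V_{12\dots n}$, which accounts for the $K_t$ factor, and the remainder $\mathbb{V}\setminus(\{0\}\cup V_{12\dots n})$, which is identified with the vertex set of $G^c(\mathbb{L})$.

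First I would pin down the vertex sets. By the construction of $\mathbb{L}$ from $[\mathbb{V}]$, the elements of $\mathbb{L}$ other than $0$ and $1$ are precisely the elements of the blocks $V_I=\mathbb{L}_I$ with $\emptyset\neq I\subsetneq\{1,\dots,n\}$, so this set is exactly $\mathbb{V}\setminus(\{0\}\cup V_{12\dots n})$. Each such element is a zero-divisor of $\mathbb{L}$: if $x\in V_I$ with $I\neq\{1,\dots,n\}$, pick $j\notin I$ and any $y\in V_{\{j\}}$; then $x\wedge y=0$ since $I\cap\{j\}=\emptyset$. Since $1\wedge y=y\neq0$ for every $y\neq0$, the element $1$ is not a zero-divisor, so $V(G(\mathbb{L}))=V(G^c(\mathbb{L}))=\mathbb{L}\setminus\{0,1\}=\mathbb{V}\setminus(\{0\}\cup V_{12\dots n})$.

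Next I would match the edges on this common vertex set. For $a\in V_I$ and $b\in V_J$ with $a\neq b$ and $\emptyset\neq I,J\subsetneq\{1,\dots,n\}$, the observations recorded above give $S_{\mathcal B}(a)\cap S_{\mathcal B}(b)\neq\emptyset$ if and only if $I\cap J\neq\emptyset$, while $a\wedge b=0$ in $\mathbb{L}$ if and only if $I\cap J=\emptyset$; in particular $a\wedge b\neq0$ when $I=J$, since two distinct elements of the chain replacing $V_I$ have meet inside that chain (this is also Lemma \ref{property}(4)). Hence $a$ and $b$ are adjacent in $\mathbb{IG(V)}$ precisely when they are non-adjacent in $G(\mathbb{L})$, i.e.\ adjacent in $G^c(\mathbb{L})$, so the induced subgraph of $\mathbb{IG(V)}$ on $\mathbb{V}\setminus(\{0\}\cup V_{12\dots n})$ equals $G^c(\mathbb{L})$.

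Finally, for $a\in V_{12\dots n}$ we have $S_{\mathcal B}(a)=\mathcal B$, so $S_{\mathcal B}(a)\cap S_{\mathcal B}(b)=S_{\mathcal B}(b)\neq\emptyset$ for every nonzero $b$; thus $V_{12\dots n}$ induces a clique $K_t$ with $t=|V_{12\dots n}|=(|\mathbb{F}|-1)^n$ and is completely joined in $\mathbb{IG(V)}$ to all remaining vertices. Together with the previous paragraph this yields $\mathbb{IG(V)}=G^c(\mathbb{L})\vee K_t$. The only delicate point, and the step where I expect to spend the most effort, is checking that replacing each block $V_I$ by a chain in the construction of $\mathbb{L}$ does not alter the meet-zero relation, equivalently the adjacency of $G(\mathbb{L})$, including the within-block case; Lemma \ref{ll'}, Lemma \ref{property}, and the displayed observation that $x\wedge y=0$ in $\mathbb{L}$ if and only if $I\cap J=\emptyset$ are exactly what is used there.
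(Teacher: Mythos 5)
Your proposal is correct and follows essentially the same route as the paper's proof: identify $V(G(\mathbb{L}))=\mathbb{L}\setminus\{0,1\}=\mathbb{V}\setminus(\{0\}\cup V_{12\dots n})$, translate adjacency via $a\wedge b=0$ in $\mathbb{L}$ iff $I\cap J=\emptyset$ iff $S_{\mathcal{B}}(a)\cap S_{\mathcal{B}}(b)=\emptyset$, and note that every vertex of $V_{12\dots n}$ is adjacent to all other nonzero vectors, giving the join with $K_t$. Your extra care with the within-block case $I=J$ and with verifying that every element of $\mathbb{L}\setminus\{0,1\}$ is a zero-divisor only makes explicit what the paper compresses into the remark that $[\mathbb{V}]$ is Boolean.
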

	
	\begin{proof}
		Note that $V(G(\mathbb{L}))=\mathbb{L}\setminus\{0,1\}$, as $[\mathbb{V}]$ is Boolean. Further, $V(\mathbb{IG(V)})= V(G(\mathbb{L}))\cup V_{12\dots n}$. For $u,v\in \mathbb{V}\setminus \{\{0\} \cup V_{12\dots n}\}$. The vertices $u$ and $v$ are adjacent in  $G(\mathbb{L})$ if and only if $u\wedge v=0$ in $\mathbb{L}$ if and only if $V_I\wedge V_J=V_{\emptyset}$, where $u\in V_I, v\in V_J$ if and only if $I\cap J=\emptyset$ if and only if $S_{\mathcal{B}}(u)\cap S_{\mathcal{B}}(v)=\emptyset$ if and only if $u$ and $v$ are not adjacent in $\mathbb{IG(V)}$. This proves that the induced subgraph of $\mathbb{IG(V)}$ on the vertex set $\mathbb{V}\setminus \{\{0\} \cup V_{12\dots n}\}$ is equal to $G^c(\mathbb{L})$.
		
		 Let $w\in V_{12\dots n}$ and $(0\neq)u\in \mathbb{V}$ such that $u\neq w$, then $S_{\mathcal{B}}(w)=\{v_1,\dots,v_n\}$ and $S_{\mathcal{B}}(u)=\{v_{i_1},\dots,v_{i_k}\}$ for some $\{i_1,\dots,i_k\}\subseteq \{1,\dots,n\}$ and $k\geq 1$. Clearly, $S_{\mathcal{B}}(w)\cap S_{\mathcal{B}}(u)\neq \emptyset$. This implies that  any $w\in V_{12\dots n}$ is adjacent to a nonzero $ u\in \mathbb{V}\setminus\{w\}$ in $\mathbb{IG(V)}$. This proves that $\mathbb{IG(V)}=G^c(\mathbb{L})\vee K_t$, where $t=|V_{12\dots n}|=(|\mathbb{F}|-1)^n$.\end{proof}

	\begin{thm}\label{ugv}
		$\mathbb{UG(V)}=G(\mathbb{L}^\partial)\vee K_t$, where $t=|V_{12\dots n}|=(|\mathbb{F}|-1)^n$.
	\end{thm}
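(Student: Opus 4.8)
The plan is to mirror exactly the argument of Theorem~\ref{igv}, but now using the dual lattice $\mathbb{L}^\partial$ and the union adjacency $S_{\mathcal{B}}(a)\cup S_{\mathcal{B}}(b)=\mathcal{B}$ in place of the intersection adjacency. First I would record the vertex sets: since $[\mathbb{V}]$ is Boolean, $V(G(\mathbb{L}^\partial))=\mathbb{L}\setminus\{0,1\}$, and $V(\mathbb{UG(V)})=V(G(\mathbb{L}^\partial))\cup V_{12\dots n}$, with the $t=|V_{12\dots n}|=(|\mathbb{F}|-1)^n$ vectors of full skeleton forming the part that becomes $K_t$.

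Next I would handle the induced subgraph on $\mathbb{V}\setminus(\{0\}\cup V_{12\dots n})$. Take $u\in V_I$, $v\in V_J$ with $I,J$ proper nonempty subsets of $\{1,\dots,n\}$. The chain of equivalences to push through is: $u$ and $v$ are adjacent in $G(\mathbb{L}^\partial)$ $\iff$ $u\wedge_{\mathbb{L}^\partial} v=0_{\mathbb{L}^\partial}$, i.e.\ (dualizing) $u\vee_{\mathbb{L}} v=1_{\mathbb{L}}$ $\iff$ $V_I\vee V_J=V_{\{1,\dots,n\}}$ $\iff$ $I\cup J=\{1,\dots,n\}$ (this last step uses the observation numbered (2) in the excerpt, $x\vee y=1$ in $\mathbb{L}$ iff $I\cup J=\{1,\dots,n\}$) $\iff$ $S_{\mathcal{B}}(u)\cup S_{\mathcal{B}}(v)=\mathcal{B}$ $\iff$ $u$ and $v$ are adjacent in $\mathbb{UG(V)}$. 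Hence the induced subgraph of $\mathbb{UG(V)}$ on this vertex set equals $G(\mathbb{L}^\partial)$. One small point worth stating carefully: I should note that the zero-divisor graph is defined for posets with $0$ via lower cones, and that passing to $G(\mathbb{L}^\partial)$ simply replaces lower cones by upper cones and $0$ by $1$; since $\mathbb{L}$ is a lattice, $\{u,v\}^\ell=\{0\}$ in $\mathbb{L}^\partial$ is literally $u\vee v=1$ in $\mathbb{L}$, which is why the dual zero-divisor graph captures exactly the union condition. Also $\mathbb{L}^\partial$ is again a $0$-$1$-distributive lattice, as remarked before Theorem~\ref{igv}, so $G(\mathbb{L}^\partial)$ is a well-defined object of the theory.

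Then I would deal with the join part: for any $w\in V_{12\dots n}$ we have $S_{\mathcal{B}}(w)=\mathcal{B}$, so for every nonzero $u\in\mathbb{V}\setminus\{w\}$ we get $S_{\mathcal{B}}(w)\cup S_{\mathcal{B}}(u)=\mathcal{B}$, hence $w$ is adjacent to every other nonzero vertex; in particular the vertices of $V_{12\dots n}$ are pairwise adjacent (forming $K_t$) and each is joined to every vertex of $G(\mathbb{L}^\partial)$. Combining this with the previous paragraph gives $\mathbb{UG(V)}=G(\mathbb{L}^\partial)\vee K_t$.

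The argument is essentially routine once the dual is set up correctly, so I do not expect a genuine obstacle; the only place to be careful is the bookkeeping that the \emph{dual} zero-divisor graph encodes the \emph{union} condition (not a complement, unlike the intersection case which needed $G^c(\mathbb{L})$). Concretely: in Theorem~\ref{igv} intersection-nonemptiness is the negation of the meet-zero condition, forcing a complement; here union-equals-$\mathcal{B}$ is exactly the join-equals-$1$ condition, i.e.\ the meet-equals-$0$ condition in $\mathbb{L}^\partial$, so no complement appears. I would make sure the write-up flags this asymmetry explicitly so the reader is not misled by the parallel with the previous theorem.
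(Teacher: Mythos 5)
Your proposal is correct and follows essentially the same route as the paper's proof: the same chain of equivalences identifying adjacency in $G(\mathbb{L}^\partial)$ with $u\vee v=1$ in $\mathbb{L}$, hence with $S_{\mathcal{B}}(u)\cup S_{\mathcal{B}}(v)=\mathcal{B}$, followed by the observation that every $w\in V_{12\dots n}$ is adjacent to all other nonzero vectors, yielding the join with $K_t$. Your extra remarks (the vertex-set bookkeeping and the explicit note that no complement is needed here, unlike in Theorem~\ref{igv}) are accurate clarifications of the same argument.
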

	
	\begin{proof}
	Consider the induced subgraph of $\mathbb{UG(V)}$ on the set $\mathbb{V}\setminus \{\{0\} \cup V_{12\dots n}\}$. Note that  the greatest element $1$ of $\mathbb{L}$ is the zero element of $\mathbb{L}^\partial$. Let $u,v\in \mathbb{V}\setminus \{\{0\} \cup V_{12\dots n}\}$. The vertices $u$ and $v$ are adjacent in  $G(\mathbb{L}^\partial)$ if and only if $u\wedge v=1$ in $\mathbb{L}^\partial$ if and only if $u\vee v=1$ in $\mathbb{L}$ if and only if $V_I\vee V_J=V_{12\dots n}$, where $u\in V_I, v\in V_J$ if and only if $I\cup J=\{1,2,\dots,n\}$ if and only if $S_{\mathcal{B}}(u)\cup S_{\mathcal{B}}(v)=\mathcal{B}$ if and only if $u$ and $v$ are  adjacent in $\mathbb{UG(V)}$. This proves that the induced subgraph of $\mathbb{IG(V)}$ on the vertex set $\mathbb{V}\setminus \{\{0\} \cup V_{12\dots n}\}$ is equal to $G(\mathbb{L}^\partial)$.
		
	 Let $w\in V_{12\dots n}$ and $(0\neq)u\in \mathbb{V}$ such that $u\neq w$. Then $S_{\mathcal{B}}(w)=\{v_1,\dots,v_n\}$ and $S_{\mathcal{B}}(u)=\{v_{i_1},\dots,v_{i_k}\}$ for some $\{i_1,\dots,i_k\}\subseteq \{1,\dots,n\}$ and $k\geq 1$. Clearly, $S_{\mathcal{B}}(w)\cup S_{\mathcal{B}}(u)=\{v_1,\dots,v_n\}$. This implies that any $w\in V_{12\dots n}$, is adjacent to any nonzero $  u\in \mathbb{V}\setminus\{w\}$ in $\mathbb{UG(V)}$. This proves that $\mathbb{UG(V)}=G(\mathbb{L}^\partial)\vee K_t$, where $t=|V_{12\dots n}|=(|\mathbb{F}|-1)^n$. \end{proof}
		
From the above two results, it is clear that nonzero component union graph alias the skeleton union graph as well as nonzero component  graph alias skeleton intersection graph can be studied using the zero-divisor graph of ordered sets; in particular, zero-divisor graph of 0-1 distributive lattices.
		
Let $\mathbb{V}$ be a $n$-dimensional vector space over a field $\mathbb{F}$ and let $\mathcal{B}=\{v_1,\dots,v_n\}$ be a basis of $\mathbb{V}$. Then the map
\[a=a_1v_1+\dots+a_nv_n \mapsto (a_1,\dots,a_n)\]
is a vector spacce isomorphism from	$\mathbb{V}$ to $\mathbb{F}^n$, where $\mathbb{F}^n=\mathbb{F}\times \dots \times \mathbb{F}$ ($n$-times). 
Now $\mathbb{F}^n$ is also a ring (the direct product of $n$ copies of
$\mathcal{F}$. We prove that skeleton intersection  graph of $\mathbb{V}$ is the graph join of the complement of the ring-theoretic zero-divisor graph of  $\mathbb{F}^n$ and a complete graph. In fact, we prove the following result.
		
\begin{thm}
Let $\mathbb{V}$ be a $n$-dimensional vector space over a field $\mathbb{F}$ and let $\mathcal{B}=\{v_1,\dots,v_n\}$ be a basis of $\mathbb{V}$. Then 	$\mathbb{IG(V)}\cong \Gamma^c(\mathbb{F}^n)\vee K_t$, where $t=|V_{12\dots n}|=(|\mathbb{F}|-1)^n$.
\end{thm}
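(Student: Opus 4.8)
The plan is to reduce the statement to the coordinate space $\mathbb{F}^n$ with its standard basis and then repeat the argument of Theorem~\ref{igv}, using the ring $\mathbb{F}^n$ in place of the lattice $\mathbb{L}$. First I would fix the vector space isomorphism $\phi\colon\mathbb{V}\to\mathbb{F}^n$, $a_1v_1+\dots+a_nv_n\mapsto(a_1,\dots,a_n)$, and observe that it carries the skeleton $S_{\mathcal{B}}(a)=\{v_i\mid a_i\neq0\}$ to the coordinate support $\{i\mid a_i\neq0\}$ of $\phi(a)$. Consequently $\phi$ is a graph isomorphism from $\mathbb{IG(V)}$ (taken with respect to $\mathcal{B}$) onto the nonzero component graph $\mathbb{IG}(\mathbb{F}^n)$ taken with respect to the standard basis $\{e_1,\dots,e_n\}$. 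Thus it suffices to prove $\mathbb{IG}(\mathbb{F}^n)=\Gamma^c(\mathbb{F}^n)\vee K_t$.

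Next I would pin down the vertex set of $\Gamma(\mathbb{F}^n)$. Since $\mathbb{F}$ is a field, an element $(a_1,\dots,a_n)$ of $\mathbb{F}^n$ is a unit precisely when every $a_i\neq0$, i.e.\ precisely when it lies in the class $V_{12\dots n}$ (under the identification $\mathbb{V}\cong\mathbb{F}^n$), while $0$ is the zero element; every remaining element has at least one zero coordinate and at least one nonzero coordinate, hence is a nonzero zero-divisor. Therefore $V(\Gamma(\mathbb{F}^n))=\mathbb{F}^n\setminus(\{0\}\cup V_{12\dots n})$ and $V(\mathbb{IG}(\mathbb{F}^n))=V(\Gamma(\mathbb{F}^n))\cup V_{12\dots n}$, which is exactly the vertex splitting used in Theorem~\ref{igv}.

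Then I would compare adjacencies. For $u,v\in\mathbb{F}^n\setminus(\{0\}\cup V_{12\dots n})$ we have $uv=0$ in $\mathbb{F}^n$ iff $u_iv_i=0$ for all $i$ iff (as $\mathbb{F}$ has no zero-divisors) the supports of $u$ and $v$ are disjoint iff $S_{\mathcal{B}}(u)\cap S_{\mathcal{B}}(v)=\emptyset$ iff $u$ and $v$ are nonadjacent in $\mathbb{IG}(\mathbb{F}^n)$; hence the induced subgraph of $\mathbb{IG}(\mathbb{F}^n)$ on $V(\Gamma(\mathbb{F}^n))$ is exactly $\Gamma^c(\mathbb{F}^n)$. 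Finally, any $w\in V_{12\dots n}$ has full support, so $S_{\mathcal{B}}(w)\cap S_{\mathcal{B}}(u)\neq\emptyset$ for every nonzero $u\neq w$; thus the $t=(|\mathbb{F}|-1)^n$ full-support vectors form a clique each of whose vertices is joined to every other nonzero vector, giving $\mathbb{IG}(\mathbb{F}^n)=\Gamma^c(\mathbb{F}^n)\vee K_t$. Combining this with the first paragraph yields $\mathbb{IG(V)}\cong\Gamma^c(\mathbb{F}^n)\vee K_t$.

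I expect no serious obstacle here; the only point requiring a little care is the bookkeeping of vertex sets --- making sure that the units of $\mathbb{F}^n$ (which are omitted from $\Gamma(\mathbb{F}^n)$) are precisely the $t$ full-support vectors that become the $K_t$ factor, and that $0$ is excluded on both sides. An alternative route is to invoke Theorem~\ref{igv} directly and instead prove $G(\mathbb{L})\cong\Gamma(\mathbb{F}^n)$, observing via Lemma~\ref{property} that both graphs are the blow-up of the disjointness graph on nonempty proper subsets of $\{1,\dots,n\}$ in which the class indexed by $I$ is replaced by an independent set of size $(|\mathbb{F}|-1)^{|I|}$; but the direct argument above is shorter.
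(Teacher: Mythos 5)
Your proposal is correct and follows essentially the same route as the paper: identify $\mathbb{V}$ with $\mathbb{F}^n$ via the coordinate map, observe that the units of $\mathbb{F}^n$ are exactly the $t$ full-support vectors forming the $K_t$ factor, and check that on the nonzero zero-divisors adjacency in $\mathbb{IG(V)}$ (shared support) is precisely non-adjacency in $\Gamma(\mathbb{F}^n)$ (disjoint support). Your bookkeeping of the vertex sets is, if anything, slightly more explicit than the paper's, but the argument is the same.
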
		

\begin{proof} 	Let $\mathbb{V}$ be a $n$-dimensional vector space over a field $\mathbb{F}$ and let $\mathcal{B}=\{v_1,\dots,v_n\}$ be a basis of $\mathbb{V}$. Consider the ring $\mathbb{F}^n$. The set of all nonzero zero-divisors of $\mathbb{F}^n$ is $\mathbb{F}^n\setminus \big\{(0,\dots,0) \cup U(\mathbb{F}^n)\big\}$, where $U(\mathbb{F}^n)$ is the set of units of $\mathbb{F}^n$.  Now an element $a\in\mathbb{V}$ satisfies $a\in V_{12\dots n}$ if and only if $(a_1,\dots,a_n)\in U(\mathbb{F}^n)$: for if $a_i\ne0$ for all $i\in\{1,\ldots,n\}$ then
$a^{-1}=b=(a_1^{-1},a_2^{-1},\ldots,a_n^{-1})$, whereas if $a_i=0$ for some $i$ then $a$ has no inverse in $\mathbb{F}^n$. Therefore we have a bijective map from $V(\mathbb{IG(V)})$ to $V(\Gamma(\mathbb{F}^n))\cup U(\mathbb{F}^n)$ and vice-versa.

 For $u,v\in \mathbb{V}\setminus \{\{0\} \cup V_{12\dots n}\}$. The vertices $u$ and $w$ are adjacent in  $\mathbb{IG(V)}$ if and only if  $S_{\mathcal{B}}(u)\cap S_{\mathcal{B}}(w)\neq \emptyset$ if and only if 
 $u_i\neq 0$ and $w_i\neq 0$
 for some $1\leq i\leq n$ if and only if $(u_1,\dots,u_n)$ and $(w_1,\dots,w_n)$ are not adjacent in $\Gamma(\mathbb{F}^n)$. This proves that the induced subgraph of $\mathbb{IG(V)}$ on the vertex set $\mathbb{V}\setminus \{\{0\} \cup V_{12\dots n}\}$ is isomorphic to $\Gamma^c(\mathbb{F}^n)$.
	
From the proof of Theorem \ref{igv}, any $w\in V_{12\dots n}$ is adjacent to $u$ for  a nonzero $  u\in \mathbb{V}\setminus\{w\}$ in $\mathbb{UG(V)}$.  This proves that $\mathbb{IG(V)}\cong \Gamma^c(\mathbb{F}^n)\vee K_t$, where $t=|V_{12\dots n}|=(|\mathbb{F}|-1)^n$.	
\end{proof}

We will close this section by observing that the reduced graph of $\mathbb{IG(V)}$ ($\mathbb{UG(V)}$) is  the graph join of the zero-divisor graph of a Boolean ring (equivalently, Boolean lattice) and the complete graph. For this, we need the following results. It is well-known that the compressed lattice of a finite 0-distributive lattice under the equivalence relation $\approx$ (see Definition \ref{defn2.1}) is always a Boolean lattice; see \cite[Lemma 4.6]{djl}. In the next result,  the finiteness of set of atoms is enough to prove that $[L]$ is Boolean, if $L$ is a 0-distributive lattice.

	\begin{theorem}[{\cite[Theorem 2.15]{nkvj2}}]\label{nboolean}
		Let $L$ be a $0$-distributive bounded lattice with finitely many atoms. Then $[L]$ is a Boolean lattice.
	\end{theorem}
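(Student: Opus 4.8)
The plan is to show that the poset $[L]$ is order-isomorphic to the power set lattice $\mathcal{P}(\{1,\dots,n\})$, where $q_1,\dots,q_n$ are the atoms of $L$; since any poset order-isomorphic to $\mathcal{P}(\{1,\dots,n\})$ is a Boolean lattice, this is enough, and in particular one need not check separately that $[L]$ is a lattice.

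First I would unwind the construction of $[L]$: it equals $[L]'\cup\{L_0\}$, the nonempty classes $L_I$ with $\emptyset\ne I\subseteq\{1,\dots,n\}$ are pairwise distinct (being disjoint blocks of the partition of $L\setminus\{0\}$), and by Lemma \ref{property}(2) the order on $[L]$ is exactly inclusion of the index sets, with $L_0$ the least element corresponding to $\emptyset$. Consequently the map $L_I\mapsto I$ is an order-embedding of $[L]$ into $\mathcal{P}(\{1,\dots,n\})$, and the entire theorem reduces to a single claim: $L_I\ne\emptyset$ for \emph{every} $I\subseteq\{1,\dots,n\}$.

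To prove that claim, the case $I=\emptyset$ is just $0\in L_0$. For $\emptyset\ne I=\{i_1,\dots,i_k\}$ I would put $x=q_{i_1}\vee\cdots\vee q_{i_k}$, which is a legitimate element since $L$ is a lattice, and verify that the atoms of $L$ below $x$ are precisely $q_{i_1},\dots,q_{i_k}$. That each $q_{i_s}\le x$ is clear, so these $k$ atoms do lie below $x$. For the reverse inclusion, let $q_j$ be an atom with $j\notin I$; as the meet of two distinct atoms is $0$, we get $q_j\wedge q_{i_s}=0$ for $s=1,\dots,k$, and then applying $0$-distributivity $k-1$ times (to $q_{i_1}$, $q_{i_1}\vee q_{i_2}$, and so on) gives $q_j\wedge x=q_j\wedge(q_{i_1}\vee\cdots\vee q_{i_k})=0$, hence $q_j\not\le x$. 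Thus $x\in L_I$ and $L_I\ne\emptyset$.

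With the claim in hand, all $2^n$ classes $L_I$ are nonempty, the order-embedding $L_I\mapsto I$ is onto, so $[L]\cong\mathcal{P}(\{1,\dots,n\})$ is a Boolean lattice. The only step that is not routine bookkeeping with the partition is the iterated use of $0$-distributivity to force $q_j\wedge(q_{i_1}\vee\cdots\vee q_{i_k})=0$: this is where the hypothesis of \emph{finitely many} atoms is genuinely used, since for an infinite family the relevant join need not even exist in $L$; it is also where $0$-distributivity is indispensable, as without it the nonemptiness claim already fails for a finite lattice such as $M_3$. I therefore expect that this nonemptiness claim, rather than any computation, carries all the content of the proof.
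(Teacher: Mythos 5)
Your argument is correct: within the paper's framework (in particular Lemma \ref{property}(2), which identifies the order on $[L]$ with inclusion of the index sets, and the stipulation that $L_0$ is the bottom class), the entire content is indeed the nonemptiness of every class $L_I$, which you establish properly by taking $x=q_{i_1}\vee\cdots\vee q_{i_k}$ and using that distinct atoms meet in $0$ together with an iterated application of $0$-distributivity to rule out atoms $q_j$ with $j\notin I$. Note that this paper gives no proof of the statement at all --- it is imported from \cite{nkvj2} --- but your route (exhibit an order-isomorphism of $[L]$ with the power set of the atom set) is precisely the natural argument for that cited result, so the proposal matches the expected proof rather than offering a genuinely different one.
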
		

Let $G$ be a finite graph. The set $\{u\in V(G)~~|~~u-v\in E(G)\}$ be the \textit{neighborhood} of a vertex $v$ in a graph $G$, denoted by $N_G(v)$. If there is no ambiguity about the graph under consideration, then we simply write $N(v)$.    Consider  a relation (given in \cite{nkvj2})  on $G$ such that $u\simeq v$ if and only if either $u=v$, or  $u-v\in E(G)$ with $N(u)\setminus \{v\}= N(v)\setminus \{u\}$. Clearly, $\simeq$ is an equivalence relation on $V(G)$. The equivalence class of $v$ is the set $\{u\in V(G)~~|~~u\simeq v\}$, denoted by $[v]^\simeq$. Denote the set $\{[v]^\simeq~~|~~ v\in V(G)\}$ by $G_{\red}$. Define $[u]-[v]$ is an edge in $E(G_{\red})$ if and only if $u-v\in E(G)$, where $[u]\neq [v]$.

\begin{remark}\label{gred}
	It is easy to observe that, if $G^c(P)$ is  the complement of the zero-divisor graph $G(P)$, then $(G^c(P))_{\red}=G^c([P])$. 
	
\end{remark}

Bagheri et al. \cite{bagheri} considered the following relation on a graph $G$: $u\approxeq v$ if and only if $N_G(u)= N_G(v)$. Clearly, $\approxeq$ is an equivalence relation on $V(G)$. The equivalence class of $v$ is the set $\{u\in V(G)~~|~~u\approx v\}$, denoted by $[v^\approxeq]$. Denote the set $\{[v^\approxeq]~~|~~ v\in V(G)\}$ by $[V(G)]$. Define $[u^\approxeq]-[v^\approxeq]$ is an edge in $E([G])$ if and only if $u-v\in E(G)$, where $[u^\approxeq]\neq [v^\approxeq]$. Let $[G]^{\approxeq}$ be a simple graph whose vertex set is $[V(G)]$, and edge set is $E([G])$. 

\begin{remark}\label{g[]}
	It is easy to observe that, if $G(P)$ be  the  zero-divisor graph, then $[G(P)]=G([P])$.	
\end{remark}

Since $\mathbb{L}$ is a $0-1$-distributive lattice with $n$ atoms, then $[\mathbb{L}]$	and $[\mathbb{L^\partial}]$ both are Boolean lattices having $2^n$ elements. This implies  that $G([\mathbb{L}])\cong G([\mathbb{L}^\partial])\cong \Gamma(\mathbb{Z}_2^n)$. In view of Theorem \ref{igv}, \ref{ugv} and Remark \ref{gred}, \ref{g[]}, we have the following result.

\begin{corollary}
	\begin{enumerate}
	\item 	$(\mathbb{IG(V)})_{\red}=\Gamma^c(\mathbb{Z}_2^n)\vee K_1$
	
	\item $[\mathbb{UG(V)}]^{\approxeq}=\Gamma(\mathbb{Z}_2^n)\vee K_t$, where $t=|V_{12\dots n}|=(|\mathbb{F}|-1)^n$.	
	\end{enumerate}
\end{corollary}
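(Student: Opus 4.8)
The plan is to deduce the corollary from Theorems~\ref{igv} and~\ref{ugv} together with Remarks~\ref{gred} and~\ref{g[]}, once it is understood how the two vertex-identification operations $(\,\cdot\,)_{\red}$ and $[\,\cdot\,]^{\approxeq}$ interact with the graph join. First I would record the following general fact: for graphs $G,H$ with disjoint vertex sets, both relations, when restricted to $V(G)$, coincide with the corresponding relations computed inside $G$, because $N_{G\vee H}(u)=N_G(u)\cup V(H)$ and $V(G)\cap V(H)=\emptyset$, so the summand $V(H)$ cancels from both sides of $N_{G\vee H}(u)\setminus\{v\}=N_{G\vee H}(v)\setminus\{u\}$ and of $N_{G\vee H}(u)=N_{G\vee H}(v)$; the same holds on $V(H)$. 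Consequently, provided no equivalence class contains vertices from both $V(G)$ and $V(H)$, the classes of $G\vee H$ are exactly those of $G$ together with those of $H$, each $G$-class is adjacent to each $H$-class (since $G\vee H$ is a join), and one gets $(G\vee H)_{\red}=G_{\red}\vee H_{\red}$ and $[G\vee H]^{\approxeq}=[G]^{\approxeq}\vee[H]^{\approxeq}$.

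Next I would eliminate these ``mixed'' classes. A one-line computation with the definitions shows that $u\in V(G)$ and $x\in V(H)$ can satisfy $u\simeq x$ in $G\vee H$ only when $u$ is adjacent to every other vertex of $G$ (and $x$ to every other vertex of $H$), while $u\approxeq x$ can never hold in a join, since $x\in N_{G\vee H}(u)$ but $x\notin N_{G\vee H}(x)$. For part~(1) we have $\mathbb{IG(V)}=G^c(\mathbb{L})\vee K_t$ by Theorem~\ref{igv}. Here no mixed class can occur because $G^c(\mathbb{L})$ has no vertex adjacent to all others: equivalently, $G(\mathbb{L})$ has no isolated vertex, which is immediate from the definition of the zero-divisor graph (a vertex $a$ comes with some $b\neq0$ having $a\wedge b=0$, and then necessarily $b\neq a$, so $a$ has a neighbour). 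Hence $(\mathbb{IG(V)})_{\red}=(G^c(\mathbb{L}))_{\red}\vee(K_t)_{\red}$. By Remark~\ref{gred}, $(G^c(\mathbb{L}))_{\red}=G^c([\mathbb{L}])$; since $[\mathbb{L}]$ is Boolean with $2^n$ elements we have $G([\mathbb{L}])\cong\Gamma(\mathbb{Z}_2^n)$, so $G^c([\mathbb{L}])\cong\Gamma^c(\mathbb{Z}_2^n)$; and all $t$ pairwise-adjacent vertices of $K_t$ have equal punctured neighbourhoods, so $(K_t)_{\red}=K_1$. This yields $(\mathbb{IG(V)})_{\red}\cong\Gamma^c(\mathbb{Z}_2^n)\vee K_1$.

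For part~(2), $\mathbb{UG(V)}=G(\mathbb{L}^\partial)\vee K_t$ by Theorem~\ref{ugv}, and as observed above $\approxeq$ never produces a mixed class in a join, so $[\mathbb{UG(V)}]^{\approxeq}=[G(\mathbb{L}^\partial)]^{\approxeq}\vee[K_t]^{\approxeq}$. By Remark~\ref{g[]}, $[G(\mathbb{L}^\partial)]^{\approxeq}=G([\mathbb{L}^\partial])$, and $[\mathbb{L}^\partial]$ is again Boolean with $2^n$ elements, hence $G([\mathbb{L}^\partial])\cong\Gamma(\mathbb{Z}_2^n)$; on the other hand distinct vertices $u\neq v$ of $K_t$ have distinct neighbourhoods $V(K_t)\setminus\{u\}\neq V(K_t)\setminus\{v\}$, so $[K_t]^{\approxeq}=K_t$, giving $[\mathbb{UG(V)}]^{\approxeq}\cong\Gamma(\mathbb{Z}_2^n)\vee K_t$ with $t=(|\mathbb{F}|-1)^n$. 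The only point requiring real care is the no-mixed-class check in part~(1) (the part~(2) version is automatic); everything else is routine unravelling of the definitions of $\simeq$, $\approxeq$, the join, and Remarks~\ref{gred}--\ref{g[]}.
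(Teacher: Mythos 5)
Your proposal is correct and follows essentially the same route as the paper, which simply cites Theorems \ref{igv} and \ref{ugv} together with Remarks \ref{gred} and \ref{g[]} and the isomorphism $G([\mathbb{L}])\cong G([\mathbb{L}^\partial])\cong\Gamma(\mathbb{Z}_2^n)$. Your explicit verification that the reductions $(\,\cdot\,)_{\red}$ and $[\,\cdot\,]^{\approxeq}$ commute with the join and that no mixed classes arise (using that $G(\mathbb{L})$ has no isolated vertex) just fills in details the paper leaves implicit.
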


\section{Applications}
	
A \textit{chord} of a cycle $C$ of graph $G$ is an edge that is not in $C$ but has both its end vertices in $C$. A graph $G$ is \textit{chordal} if every cycle of length at least $4$ has a chord, \textit{i.e.},  $G$ is chordal if and only if it does not contain induced cycle of length at least $4$. A perfect graph is a graph in which the chromatic number of every induced subgraph equals the order of the largest clique of that subgraph (clique number). The Strong Perfect Graph Theorem~\cite{strongperfect} asserts that a graph $G$  is perfect if and only if neither $G$ nor $G^c$ contains an induced odd cycle of length at least $5$.

\begin{theorem}[{\cite[Theorem 1.1] {nkvj2}}] \label{zdgchordal}
Let $P$ be a finite poset such that $[P]$ is a Boolean lattice. Then
\begin{itemize}
\item[\textbf{(A)}] $G(P)$ is chordal if and only if one of the following holds:
	\begin{enumerate}
	\item $P$ has exactly one atom;
	\item $P$ has exactly two atoms with $|P_i|=1$ for some $i\in \{1,2\}$;
	\item $P$ has exactly three atoms with $|P_i|=1$ for all $i\in \{1,2,3\}$.
	\end{enumerate}
\item[\textbf{(B)}] $G^c(P)$ is chordal if and only if the  number of atoms of $P$ is at most $3$.	
\item[\textbf{(C)}] $G(P)$ is perfect if and only if $P$ has at most 4 atoms.
\end{itemize}
\end{theorem}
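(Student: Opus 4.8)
The plan is to reduce the whole statement to the structure of $G([P])$. Since $[P]$ is Boolean with, say, $n$ atoms, it is isomorphic to the power set of $\{1,\dots,n\}$, so $G([P])$ is exactly $\Gamma(\mathbb{Z}_2^n)$: its vertices are the parts $P_I$ with $\emptyset\neq I\subsetneq\{1,\dots,n\}$, and $P_I$ is adjacent to $P_J$ precisely when $I\cap J=\emptyset$. By parts (3) and (4) of Lemma~\ref{property}, $G(P)$ is obtained from $G([P])$ by replacing each vertex $P_I$ by an independent set of size $|P_I|$, two of these being joined completely exactly when $P_I$ and $P_J$ are adjacent in $G([P])$; dually, $G^c(P)$ is the analogous substitution into $G^c([P])$ using cliques of size $|P_I|$. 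Choosing one vertex from each class exhibits $G([P])$ (resp.\ $G^c([P])$) as an \emph{induced} subgraph of $G(P)$ (resp.\ $G^c(P)$), so every forbidden configuration living in the compressed graph is inherited by the big one; this gives all of the ``only if'' directions. For the ``if'' directions I would use two standard facts: replacing a vertex by a clique (equivalently, repeatedly adding true twins) preserves chordality, and replacing a vertex of a perfect graph by a perfect graph preserves perfectness (the substitution lemma); edgeless graphs and complete graphs are both chordal and perfect.

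Part (B) then comes quickly. If $n\geq4$, the parts $P_{12},P_{23},P_{34},P_{14}$ induce a chordless $4$-cycle in $G^c([P])$ (consecutive index sets meet, opposite ones are disjoint), so $G^c(P)$ is not chordal. If $n\leq3$, one checks by hand that $G^c([P])$ is chordal: it has no vertices for $n=1$, two isolated vertices for $n=2$, and for $n=3$ the three $2$-element parts form a triangle while each singleton $P_i$ is joined exactly to the two triangle vertices whose index set contains $i$, a $6$-vertex graph with no induced cycle of length $\geq4$. Since $G^c(P)$ is a clique substitution of this chordal graph, it is chordal.

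Part (A). If $n\geq4$, then $G([P])\cong\Gamma(\mathbb{Z}_2^n)$ already contains an induced $4$-cycle, e.g.\ $P_{12},P_{34},P_1,P_3$, so $G(P)$ is not chordal. If $n=1$, every nonzero element of $P$ lies above the unique atom, so $Z(P)=\{0\}$ and $G(P)$ has no vertices, hence is vacuously chordal --- case~(1). If $n=2$, then $G([P])=K_2$ and so $G(P)=K_{|P_1|,|P_2|}$, which is chordal iff it has no induced $K_{2,2}=C_4$, i.e.\ iff $|P_i|=1$ for some $i$ --- case~(2). If $n=3$, then $G([P])$ is a triangle on $P_1,P_2,P_3$ with a pendant vertex $P_{jk}$ attached at $P_i$ (here $\{i,j,k\}=\{1,2,3\}$): if $|P_i|\geq2$ for some $i$, say $i=3$, then for distinct $x,x'\in P_3$, $a\in P_1$, $w\in P_{12}$ the four vertices $x,a,x',w$ induce a $C_4$ (using $P_3\sim P_1$, $P_3\sim P_{12}$, $P_1\not\sim P_{12}$ and the fact that $P_3$ is independent), so $G(P)$ is not chordal; if instead $|P_i|=1$ for all $i$, then every vertex of $G(P)$ outside the central triangle lies in some $P_{jk}$ (the part $P_{123}$ contributes nothing, its elements lying above all atoms and hence being dense) and is a pendant attached to the unique vertex of the complementary $P_i$, so every cycle of $G(P)$ lies in the triangle and $G(P)$ is chordal --- case~(3).

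Part (C). An induced odd cycle of length $\geq5$ fails to be perfect, and perfectness is hereditary, so for ``only if'' it is enough to produce one when $n\geq5$: in $G([P])\cong\Gamma(\mathbb{Z}_2^n)$ the five parts $S_i=\{i,i+2\}$ (indices read modulo $5$), $i=1,\dots,5$, induce a $C_5$ (consecutive $S_i$ are disjoint, non-consecutive ones meet), which passes to $G(P)$. For ``if'', since the substitution lemma preserves perfectness it suffices to show that $\Gamma(\mathbb{Z}_2^n)$ is perfect for $n\leq4$; it is chordal for $n\leq3$, so the real content is $n=4$, where I would invoke the Strong Perfect Graph Theorem~\cite{strongperfect} and rule out odd holes in $\Gamma(\mathbb{Z}_2^4)$ and in its complement. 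In $\Gamma(\mathbb{Z}_2^4)$: the four $3$-element parts are pendant vertices lying on no cycle, while among the remaining ten vertices the four singletons span a $K_4$ and the six $2$-element parts span a perfect matching; hence an induced cycle of length $\geq5$ uses at most two singletons, and any two of those, being adjacent in the graph, must be consecutive on the cycle, so deleting them leaves either a cycle or a path on $\geq3$ vertices lying entirely inside the matching --- impossible. In the complement, which is the intersection graph of the proper non-empty subsets of $\{1,2,3,4\}$, any chordless cycle $S_1S_2\cdots S_m$ with $m\geq4$ forces the elements $x_i\in S_i\cap S_{i+1}$ to be pairwise distinct (because $S_i$ is disjoint from $S_{i\pm2}$), whence $m\leq4$. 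Therefore $\Gamma(\mathbb{Z}_2^4)$ is perfect, and so is $G(P)$ for $n\leq4$, completing~(C). The step I expect to be the main obstacle is this last one --- verifying that $\Gamma(\mathbb{Z}_2^4)$ is perfect --- the only place where a genuinely structural argument is needed; everything else reduces to exhibiting a small explicit forbidden subgraph or applying one of the two substitution facts.
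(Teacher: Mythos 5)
Your argument is correct, but note that this paper does not actually prove Theorem~\ref{zdgchordal}: it is imported verbatim from \cite[Theorem 1.1]{nkvj2}, so there is no in-paper proof to compare against. Your route is the natural self-contained one and is consistent with the machinery the paper does set up: you use Lemma~\ref{property}(3),(4) to see that $G(P)$ is the blow-up of $G([P])\cong\Gamma(\mathbb{Z}_2^n)$ by independent sets (hence $G^c(P)$ is the blow-up of $G^c([P])$ by cliques), take a transversal of the classes to pull forbidden induced subgraphs back and forth, and then finish with two standard preservation facts (true twins preserve chordality; Lov\'asz's substitution lemma preserves perfection) plus explicit small configurations: the $C_4$'s for (A) and (B), the $C_5$ on the parts $P_{\{i,i+2\}}$ for the only-if of (C), and the ad hoc verification that $\Gamma(\mathbb{Z}_2^4)$ and its complement have no holes of length at least $5$. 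I checked the explicit cycles and the $n\le 3$ and $n=4$ case analyses; they are all right, including the correct observation that you cannot use independent-set substitution to preserve chordality in (A) and must instead argue by cases on $|P_i|$. One point you use silently and should state: since $[P]$ is Boolean with atoms $[q_1],\dots,[q_n]$, it has $2^n$ elements, so \emph{every} class $P_I$ with $\emptyset\ne I\subseteq\{1,\dots,n\}$ is nonempty; this is what entitles you to pick representatives such as $w\in P_{12}$ or the parts $P_{\{i,i+2\}}$ when exhibiting induced $C_4$'s and $C_5$'s inside $G(P)$. With that sentence added, the proof is complete; whether it coincides with the argument in \cite{nkvj2} cannot be judged from this paper, which only cites the result.
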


\begin{remark}\label{obs1} It is easy to observe that the following conditions
are equivalent:
\begin{enumerate}
\item $G+ I_m$ is a chordal (resp.~perfect) graph;
\item $G$ is a chordal (resp.~perfect) graph;
\item $G\vee K_m$ is a chordal (resp.~perfect) graph.
\end{enumerate}
\end{remark}		
	
In view of Theorem \ref{igv}, \ref{ugv}, \ref{zdgchordal}	and Remark \ref{obs1}, we have the following result.	

\begin{corollary}
Let $\mathbb{V}$ be a finite dimensional vector space over finite field $\mathbb{F}$. Then 
\begin{enumerate}
\item The skeleton intersection graph $\mathbb{IG(V)}$ is chordal if and only if $\dim(\mathbb{V}) \leq 3$.
\item The skeleton union graph $\mathbb{UG(V)}$ is chordal if and only if either  $\dim(\mathbb{V})=1$ or $\dim(\mathbb{V}) \in\{2, 3\}$ with $|\mathbb{F}|=2$.
\item The skeleton intersection graph $\mathbb{IG(V)}$ is perfect if and only if the skeleton union graph $\mathbb{UG(V)}$ is perfect; this occurs if and only if  $\dim(\mathbb{V}) \leq 4$.	
\end{enumerate}
\end{corollary}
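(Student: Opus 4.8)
The strategy is to feed the structural identities $\mathbb{IG(V)}=G^c(\mathbb{L})\vee K_t$ and $\mathbb{UG(V)}=G(\mathbb{L}^\partial)\vee K_t$ (Theorems \ref{igv} and \ref{ugv}) into the classification of chordal and perfect zero-divisor graphs of posets with Boolean compression (Theorem \ref{zdgchordal}), stripping off the complete join factor by Remark \ref{obs1}. Write $n=\dim\mathbb{V}$. Since $[\mathbb{L}]\cong[\mathbb{V}]\cong P(\{1,\dots,n\})$ is Boolean, and $[\mathbb{L}^\partial]$ is likewise Boolean with $2^n$ elements, Theorem \ref{zdgchordal} applies to both $P=\mathbb{L}$ and $P=\mathbb{L}^\partial$; the whole proof then reduces to counting atoms and measuring the cells of the atom-partition in these two lattices.

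First I would record the combinatorial data. The atoms of $\mathbb{L}$ are the bottom elements of the chains that replaced the singleton cells $V_{\{i\}}$, so $\mathbb{L}$ has exactly $n$ atoms (for $n=1$ read $\mathbb{L}$ as the two-element chain with its single atom, and note that then $\mathbb{IG(V)}=\mathbb{UG(V)}=K_t$, which is chordal and perfect). Dually, the atoms of $\mathbb{L}^\partial$ are the coatoms of $\mathbb{L}$, i.e. the top elements of the chains replacing the $(n-1)$-element cells $V_{\{1,\dots,n\}\setminus\{j\}}$, so $\mathbb{L}^\partial$ also has exactly $n$ atoms. Moreover, for each $j$, an element $x\in V_J$ of $\mathbb{L}$ lies below the $j$-th coatom of $\mathbb{L}$ precisely when $j\notin J$, so the cell of the atom-partition of $\mathbb{L}^\partial$ lying above its $j$-th atom and no other is exactly the chain $V_{\{1,\dots,n\}\setminus\{j\}}$, which has cardinality $(|\mathbb{F}|-1)^{n-1}$; hence $|(\mathbb{L}^\partial)_j|=1$ for all $j$ iff $n=1$ or $|\mathbb{F}|=2$.

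Now the three parts follow by inspection. (1) By Remark \ref{obs1}, $\mathbb{IG(V)}$ is chordal iff $G^c(\mathbb{L})$ is, which by Theorem \ref{zdgchordal}(B) happens iff $\mathbb{L}$ has at most three atoms, i.e. $n\le 3$. (2) Likewise $\mathbb{UG(V)}$ is chordal iff $G(\mathbb{L}^\partial)$ is; applying Theorem \ref{zdgchordal}(A) to $\mathbb{L}^\partial$ and using that every cell $(\mathbb{L}^\partial)_j$ has the same size $(|\mathbb{F}|-1)^{n-1}$, the three listed cases collapse to: $n=1$; or $n=2$ with $|\mathbb{F}|=2$; or $n=3$ with $|\mathbb{F}|=2$; while for $n\ge 4$ there are more than three atoms, so $G(\mathbb{L}^\partial)$ is not chordal. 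This is exactly the stated condition. (3) By the Strong Perfect Graph Theorem \cite{strongperfect}, whose forbidden-subgraph characterization is symmetric under complementation, $G^c(\mathbb{L})$ is perfect iff $G(\mathbb{L})$ is; so $\mathbb{IG(V)}$ is perfect iff $G(\mathbb{L})$ is perfect iff (Theorem \ref{zdgchordal}(C), Remark \ref{obs1}) $\mathbb{L}$ has at most four atoms iff $n\le 4$, and identically $\mathbb{UG(V)}$ is perfect iff $\mathbb{L}^\partial$ has at most four atoms iff $n\le 4$. Hence the two perfection conditions coincide and are equivalent to $\dim\mathbb{V}\le 4$. The only subtle point in the whole argument is the dualization bookkeeping in part (2)---verifying that the atom-cells of $\mathbb{L}^\partial$ are precisely the coatom-chains $V_{\{1,\dots,n\}\setminus\{j\}}$ of $\mathbb{L}$, so that they all have size $(|\mathbb{F}|-1)^{n-1}$---together with keeping the degenerate case $n=1$ straight.
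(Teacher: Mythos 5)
Your proposal is correct and is essentially the paper's own argument: the paper derives this corollary simply by citing Theorems \ref{igv}, \ref{ugv}, \ref{zdgchordal} and Remark \ref{obs1}, and your write-up just makes the intended bookkeeping explicit (counting the $n$ atoms of $\mathbb{L}$ and $\mathbb{L}^\partial$, noting each atom-cell of $\mathbb{L}^\partial$ is a chain of size $(|\mathbb{F}|-1)^{n-1}$, and using that perfection is complementation-invariant). No gaps; the dualization details in part (2) and the $n=1$ degenerate case are handled correctly.
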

		
Many other properties of skeleton intersection and skeleton union graphs can be obtained from the zero-divisor graphs of posets. Such properties include being
weakly perfect, Eulerian or Hamiltonian.
	
\noindent \textbf{Acknowledgment:} The first author is financially supported by the Council of Scientific and Industrial Research(CSIR), New Delhi via Senior Research Fellowship Award Letter No. 09/137(0620)/2019-EMR-I.
	
	
\noindent\textbf{Conflict of interest:} The authors declare that there is no conflict of	interests regarding the publishing of this paper.
	
\noindent\textbf{Authorship Contributions:} The authors read and approved the final version of the manuscript.

\end{document}